\theoremstyle{theorem}
\newtheorem{theorem}{Theorem}
\newtheorem{lemma}[theorem]{Lemma}
\newtheorem{corollary}[theorem]{Corollary}
\theoremstyle{definition}
\newtheorem{exercise}{Exercise}
\newcommand{\RR}{\mathbb{R}}
\DeclareMathOperator{\sgn}{sgn}
\DeclareMathOperator{\rank}{rank}
\DeclareMathOperator{\dih}{dih}
\DeclareMathOperator{\sig}{sig}
    \let\@fnsymbol\@arabic
\begin{document}
\title{Sarrus rules and dihedral groups}
\author{Dirk A. Lorenz\thanks{d.lorenz@tu-braunschweig.de} \quad Karl-Joachim Wirths\thanks{k-j.wirths@tu-braunschweig.de}\\ Institute of Analysis and Algebra\\
TU Braunschweig}
\maketitle


\noindent
Beginners in Linear Algebra learn to calculate the $3\times 3$ determinant by the rule of Sarrus. 
For a matrix $A = (a_{ij})$, the determinant is
\[
\det(A)\! = \!a_{11}a_{22}a_{33} + a_{12}a_{23}a_{31} + a_{13}a_{21}a_{32} - a_{13}a_{22}a_{31} - a_{11}a_{23}a_{32} - a_{12}a_{21}a_{33}
\]
and the six summands are found in the following scheme 
\begin{center}
  \includegraphics{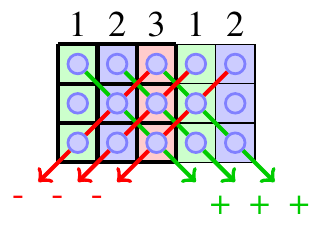}








    
    
    
    
        
    
    
\end{center}
known as ``Rule of Sarrus''.
The right generalization of this rule to larger square matrices is
Leibniz' explicit formula for the determinant which is quite
complicated as it contains, for the $n\times n$ case, $n!$ summands where each summand is a
product of $n$ entries of the matrix (see below for a formula). Hence,
there seems to be a natural tendency of students to use the following
generalization of Sarrus' rule: to
calculate the determinant of a $n\times n$ matrix, the $n-1$ first
columns are repeated behind the matrix. Then the sum of the product of
the $n$ diagonals of this scheme from the upper left to the lower
right is calculated, and the sum of the $n$ diagonals from the upper right
to the lower left is taken and the latter sum is subtracted from
the former sum as here:
\begin{center}
  \includegraphics{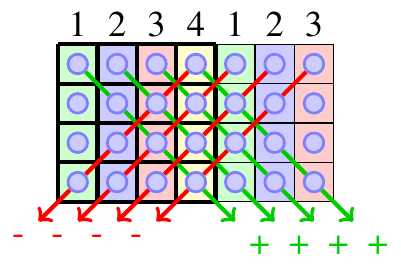}

    
    
    
   
        
    
    
    
\end{center}
This amounts to the following formula:
\[
\begin{split}
  a_{11}a_{22}a_{33}a_{44} + a_{12}a_{23}a_{34}a_{41} +
  a_{13}a_{24}a_{31}a_{42} + a_{14}a_{21}a_{32}a_{43}\\
  \qquad  -  a_{14}a_{23}a_{32}a_{41} - a_{11}a_{24}a_{33}a_{42} - a_{12}a_{21}a_{34}a_{43} - a_{13}a_{22}a_{31}a_{44}.
\end{split}
\]
In this article we will give a generalization of this
formula to the $n\times n$ case and we will call this formula the \emph{False Sarrus Rule}. 
The quantity that it calculates will be called \emph{dihedrant} for reasons that will become clear below. Then, we will derive some
properties of the determinant that are similar to certain properties of the dihedrant. Especially, we are interested in
conditions that guarantee that this quantity will be equal to
$\det(A)$. Further, we shall show a scheme, which resembles Sarrus'
rule and can help to calculate the correct determinant of a
$4\times 4$ matrix.

\section{The determinant and the ``dihedrant''.}

Let $A\,=\,(a_{i,j}), i,j\,=\,1,\cdots, n$. The \emph{Leibniz formula} for the determinant is the following sum which runs over all permutations of the indices $\{1,\dots,n\}$, i.e., over the whole symmetric group $S_{n}$:
\begin{equation*}
  \label{eq:leibniz}
  \det(A) = \sum_{\sigma\in S_{n}}\sgn(\sigma)\prod_{i=1}^{n}a_{i,\sigma(i)}.
\end{equation*}
To obtain a formula for the false Sarrus rule we note that the product that corresponds to the diagonal from the upper left to the lower right beginning with $a_{1,k}$ is of the form
\[
\prod_{i=1}^na_{i, \mbox{\rm{mod}}(i+k-1,n)}\,=\,\prod_{i=1}^na_{i,\rho_k(i)}.\]
The respective permutation $\rho_k$ of $(1,\cdots,n)$ has the values
\[\rho_k(1)=k,\cdots,\rho_k(n-k+1)=n,\rho_k(n-k+2)=1,\cdots,\rho_k(n)=k-1.\]
It can be visualized as the rotation of a regular $n$-gon by an angle $(k-1)2\pi/n$ , where the vertices are numbered by $1,\cdots,n$.
\begin{center}
  \includegraphics{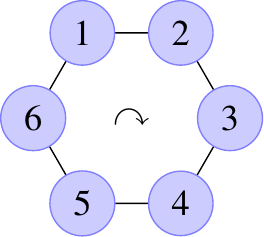}
  \quad
  \raisebox{1cm}{$\stackrel{\rho_{3}}{\longrightarrow}$}
  \quad
  \includegraphics{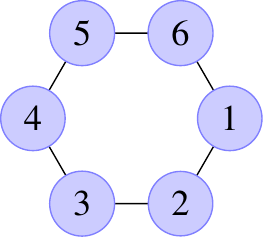}
\end{center}
These permutations will be called \emph{rotations} in the following.

The product that corresponds to the diagonal from the upper right to the lower left beginning with $a_{1,k}$ is of the form
\[
\prod_{i=1}^na_{i, \mbox{\rm{mod}}(k+1-i,n)}\,=\,\prod_{i=1}^na_{i,\mu_k(i)}.
\]
The respective permutation $\mu_k$ of $(1,\cdots,n)$ has the values
\[
\mu_k(1)=k,\,\mu_k(2)=k-1,\cdots,\mu_k(k)=1,\mu_k(k+1)=n,\cdots,\mu_k(n)=k+1.
\]
It can be visualized as the reflection of the above regular $n$-gon reflecting it in its axis of symmetry that lies in the middle of the vertices with numbers $1$ and $k$.
\begin{center}
  \includegraphics{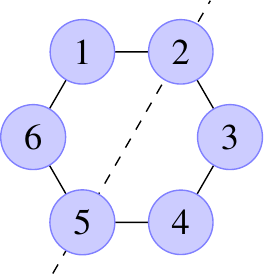}
  \quad
  \raisebox{1.3cm}{$\stackrel{\mu_{3}}{\longrightarrow}$}
  \quad
  \includegraphics{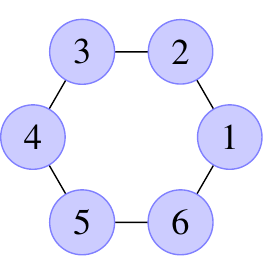}
\end{center}
These permutations will be called \emph{reflections} in the following.
\begin{exercise}
  Visualize all rotations $\rho_{k}$ and reflections $\mu_{k}$ for for $n=3,4,5$.
\end{exercise}

All involved permutations together form the \emph{dihedral group} $D_{n}$ (sometimes also denoted by $D_{2n}$ because it has $2n$ elements).
We assume that the reader is familiar with these groups.
In the following we mainly use the fact that the composition of two rotations and the composition of two reflections is a rotation, whereas the composition of a reflection and a rotation is a reflection. 
With the help of rotations $\rho_{k}$ and reflections $\mu_{k}$ we write the announced  formula for the False Sarrus Rule as
\begin{equation*}\label{f1}
  \dih(A)\,=\,\sum_{k=1}^{n}\prod_{i=1}^na_{i,\rho_{k}(i)}\,-\,\sum_{k=1}^{n}\prod_{i=1}^na_{i,\mu_{k}(i)}.
\end{equation*}
Since this quantity is built upon the dihedral group, we call it \emph{dihedrant}.
To push the similarity between the determinant and the dihedrant a little further, we define
\[
\sig(\rho_{k}) = 1,\qquad \sig(\mu_{k}) = -1,\quad k=1,\dots, n,
\]
and get the formula
\[
\dih(A) = \sum_{\sigma\in D_{n}}\sig(\sigma)\prod_{i=1}^{n}a_{i,\sigma(i)}.
\]
The above mentioned composition rules imply that for all $\sigma,\tau\in D_{n}$ it holds that
\[
\sig(\tau\circ\sigma) = \sig(\tau)\sig(\sigma).
\]
Let us consider the first cases:
\begin{itemize}
\item The group $D_1$ (visualized as a loop) consists of one
  reflection and the identity map, both permuting the vertices of the
  $1$-gon in the same way and thus, $\dih(A)=0$ for any $1\times 1$
  matrix $A$. 
\item For $n=2$, there are exactly two
  rotations and two reflections in $D_{2}$, where for both rotations there is a
  reflection that permutes the vertices in the same way, and hence
  $\dih(A) = a_{11}a_{22} + a_{12}a_{21} -
  a_{12}a_{21}-a_{11}a_{22}=\,0 $
  for any $2\times 2$ matrix $A$, too.
\item In the case $n=3$ we have that $D_3$ is the full symmetric group
  and the rotations fulfill $\sig(\rho_{k}) = \sgn(\rho_{k}) = +1$ and the reflections fulfill $\sig(\mu_{k}) = \sgn(\mu_{k}) = -1$
  and consequently,  the dihedrant equals the determinant.
\end{itemize}
The case $n=3$ is a lucky coincidence and for $n>3$ the dihedrant is
only loosely related to the determinant.  Nonetheless, the dihedrant
has some interesting properties.
Our first example for such a property says that the dihedrant also fulfills an equation similar to $\det(A) = \det(A^{T})$.

\begin{theorem}\label{thm:AT}
  $\dih(A^T)\,=\,\dih(A)$.
\end{theorem}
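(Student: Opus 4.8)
The plan is to mimic the standard proof that $\det(A)=\det(A^T)$, adapting each step to the dihedral setting. First I would write out the definition of the dihedrant for the transpose. Since $(A^T)_{i,j}=a_{j,i}$, the definition gives
\[
\dih(A^T) = \sum_{\sigma\in D_n}\sig(\sigma)\prod_{i=1}^n a_{\sigma(i),i}.
\]
Then I would reindex each product by the substitution $j=\sigma(i)$, equivalently $i=\sigma^{-1}(j)$; as $i$ runs through $\{1,\dots,n\}$ so does $j$, and the product rearranges to $\prod_{j=1}^n a_{j,\sigma^{-1}(j)}$. This turns the expression into $\sum_{\sigma\in D_n}\sig(\sigma)\prod_{j=1}^n a_{j,\sigma^{-1}(j)}$.

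The decisive step is to replace the summation index by $\tau=\sigma^{-1}$. Because $D_n$ is a group, inversion is a bijection of $D_n$ onto itself, so $\tau$ ranges over all of $D_n$ exactly once. It then remains to check the dihedral analogue of $\sgn(\sigma)=\sgn(\sigma^{-1})$, namely that $\sig$ is unchanged under inversion. Here the homomorphism property $\sig(\tau\circ\sigma)=\sig(\tau)\sig(\sigma)$ recorded above does the work: taking $\tau=\sigma^{-1}$ yields $\sig(\sigma^{-1})\sig(\sigma)=\sig(\mathrm{id})$, and since the identity is the rotation $\rho_1$ we have $\sig(\mathrm{id})=1$; as $\sig$ takes only the values $\pm1$ this forces $\sig(\sigma^{-1})=\sig(\sigma)$.

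Putting these together, after the substitution $\tau=\sigma^{-1}$ the sum becomes
\[
\sum_{\tau\in D_n}\sig(\tau)\prod_{j=1}^n a_{j,\tau(j)},
\]
which is precisely $\dih(A)$, completing the argument.

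The only genuine obstacle is the invariance $\sig(\sigma^{-1})=\sig(\sigma)$; the rest is the same bookkeeping as for the determinant, and it works for the dihedrant precisely because $D_n$ is a group and $\sig$ is multiplicative. I would also remark that this invariance can be seen geometrically without invoking multiplicativity: the inverse of a rotation is again a rotation and every reflection is an involution (hence its own inverse), so inversion preserves the rotation/reflection type and therefore the value of $\sig$. Either route closes the proof.
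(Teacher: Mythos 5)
Your proposal is correct and follows essentially the same route as the paper's own proof: rewrite $\dih(A^T)$ via $(A^T)_{i,j}=a_{j,i}$, reindex the product, substitute $\sigma\mapsto\sigma^{-1}$ using that $D_n$ is a group, and conclude with $\sig(\sigma^{-1})=\sig(\sigma)$ derived from the multiplicativity of $\sig$. You merely spell out in more detail (and add a nice geometric alternative for) the invariance of $\sig$ under inversion, which the paper states in one line.
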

\begin{proof}
  Let $A^T = (b_{i,j})$. Since $A^T$ is the transposed of $A$, we have $b_{i,j}=a_{j,i}$. Hence,
  \begin{align*}
  \dih(A^T) & = \sum_{\sigma\in D_{n}}\sig(\sigma)\prod_{i=1}^{n}b_{i,\sigma(i)} = \sum_{\sigma\in D_{n}}\sig(\sigma)\prod_{i=1}^{n}a_{\sigma(i),i}\\
    & = \sum_{\sigma\in D_{n}}\sig(\sigma)\prod_{i=1}^{n}a_{i,\sigma^{-1}(i)}.
  \end{align*}
  Since $D_n$ is a group and the composition rule for $\sig$ implies $\sig(\sigma^{-1}) = \sig(\sigma)$, the last expression
  equals $\dih(A)$.
\end{proof}

For any matrix $A = (\mathbf{a}_{1},\dots,\mathbf{a}_{n})$ and any permutation $\sigma\in S_{n}$ it holds that $\det(\mathbf{a}_{\sigma(1)},\dots,\mathbf{a}_{\sigma(n)}) = \sgn(\sigma)\det(A)$.
However, the following example shows that this is not true for the dihedrant and arbitrary $\sigma\in S_{n}$, $n\geq 4$:
Consider the identity matrix $I$ and exchange the first two columns to get the matrix $\tilde I$.
Then we have $\det(\tilde I) = -1$, but $\dih(I)=1$ and $\dih(\tilde I) = 0$.
If we restrict ourselves to $\sigma\in D_{n}$, however, we still get a similar property.
\begin{theorem}\label{thm:rotation}
  For an $n\times n$ matrix $A= (\mathbf{a}_{1},\dots,\mathbf{a}_{n})$
  and $\sigma\in D_{n}$ it holds that
  \[
  \dih(\mathbf{a}_{\sigma(1)},\dots,\mathbf{a}_{\sigma(n)}) = \sig(\sigma)\dih(A).
  \]
\end{theorem}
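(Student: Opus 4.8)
The plan is to mimic the proof of Theorem~\ref{thm:AT}: expand the dihedrant of the column-permuted matrix directly from the definition and then reindex the resulting sum using the group structure of $D_n$. Writing $B = (\mathbf{a}_{\sigma(1)},\dots,\mathbf{a}_{\sigma(n)})$, its entries are $b_{i,j} = a_{i,\sigma(j)}$, so that
\[
\dih(B) = \sum_{\tau\in D_{n}}\sig(\tau)\prod_{i=1}^{n} b_{i,\tau(i)} = \sum_{\tau\in D_{n}}\sig(\tau)\prod_{i=1}^{n} a_{i,\sigma(\tau(i))}.
\]
The $i$-th factor is $a_{i,(\sigma\circ\tau)(i)}$, hence the product over $i$ is precisely the product attached to the permutation $\sigma\circ\tau$.

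Here is the crucial point, and the only place where the hypothesis $\sigma\in D_{n}$ is used: since $D_{n}$ is a group, $\sigma\circ\tau$ again lies in $D_{n}$, and the map $\tau\mapsto\pi:=\sigma\circ\tau$ is a bijection of $D_{n}$ onto itself. If $\sigma$ were an arbitrary element of $S_{n}$, then $\sigma\circ\tau$ would in general leave $D_{n}$, the reindexing would break down, and indeed the identity would be false, as the column-swap example preceding the theorem illustrates. Substituting $\tau = \sigma^{-1}\circ\pi$ and letting $\pi$ range over all of $D_{n}$ therefore merely permutes the summands.

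It then remains to track the sign. From $\tau = \sigma^{-1}\circ\pi$ together with the multiplicativity $\sig(\tau\circ\sigma) = \sig(\tau)\sig(\sigma)$ we obtain $\sig(\tau) = \sig(\sigma^{-1})\sig(\pi)$, and since $\sig$ takes values in $\{+1,-1\}$ each value is its own inverse, so $\sig(\sigma^{-1}) = \sig(\sigma)$ exactly as used in Theorem~\ref{thm:AT}. Pulling the constant factor $\sig(\sigma)$ out of the sum yields
\[
\dih(B) = \sum_{\pi\in D_{n}}\sig(\sigma)\sig(\pi)\prod_{i=1}^{n} a_{i,\pi(i)} = \sig(\sigma)\dih(A).
\]

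I expect no genuine obstacle in the computation itself; the entire content of the argument is the observation that $\sigma\circ\tau$ stays inside $D_{n}$ precisely because $\sigma\in D_{n}$. This is the exact analogue of the determinant fact that summing over the full group $S_{n}$ makes the corresponding column-permutation identity hold for \emph{every} $\sigma\in S_{n}$, whereas summing only over the proper subgroup $D_{n}$ forces us to restrict the admissible $\sigma$ to $D_{n}$.
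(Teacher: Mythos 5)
Your proof is correct and takes essentially the same route as the paper's: expand the dihedrant of the column-permuted matrix, observe that composing with $\sigma\in D_{n}$ stays inside the group, and reindex the sum using the multiplicativity of $\sig$ together with $\sig(\sigma^{-1})=\sig(\sigma)$. If anything, your write-up makes explicit (the substitution $\pi=\sigma\circ\tau$ and the bijectivity of left multiplication) what the paper's two-line computation leaves implicit.
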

\begin{proof}
  The proof follows immediately from
  \begin{align*}
    \dih(\mathbf{a}_{\sigma(1)},\dots,\mathbf{a}_{\sigma(n)}) & = \sum_{\tau\in D_{n}}\sig(\tau)\prod_{i=1}^{n}a_{i,(\tau\circ\sigma)(i)}\\
    & = \sig(\sigma)\sum_{\tau\in D_{n}}\sig(\sigma\circ\tau)\prod_{i=1}^{n}a_{i,(\tau\circ\sigma)(i)} = \sig(\sigma)\dih(A).
  \end{align*}
\end{proof}
Combining Theorem~\ref{thm:AT} and Theorem~\ref{thm:rotation}, we see that  a analogous statement is valid, if one replaces columns by rows.


Now, we consider $\det$ and $\dih$ as maps from $\left(\RR^n\right)^n$ to $\RR$, it becomes clear that both are multilinear:
\begin{theorem}\label{thm:linear}
  The map $\dih$ is linear in any row and in any column of the
  matrix  $A$.
\end{theorem}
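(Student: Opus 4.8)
The plan is to exploit the fact that the dihedrant is, like the determinant, a signed sum of monomials, each of which is a product of exactly one entry from every row and from every column of $A$. Linearity then reduces to the elementary observation that every such monomial is a homogeneous degree-one function of the entries of any prescribed row (or column), so the whole sum inherits this linearity term by term.

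First I would establish linearity in a fixed row $j$. Starting from
\[
\dih(A) = \sum_{\sigma\in D_{n}}\sig(\sigma)\prod_{i=1}^{n}a_{i,\sigma(i)},
\]
I would note that in each product the index $i$ ranges over all of $\{1,\dots,n\}$, so exactly one factor, namely $a_{j,\sigma(j)}$, carries the row index $j$. Factoring it out gives
\[
\prod_{i=1}^{n}a_{i,\sigma(i)} = a_{j,\sigma(j)}\prod_{i\neq j}a_{i,\sigma(i)},
\]
and the remaining product does not involve row $j$ at all. Hence each summand is a linear function of the row vector $(a_{j,1},\dots,a_{j,n})$, and a signed sum of linear functions is again linear; this proves linearity in row $j$, and $j$ was arbitrary.

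For the columns there are two options. The quickest is to invoke Theorem~\ref{thm:AT}: since $\dih(A^{T}) = \dih(A)$ and transposition turns the columns of $A$ into the rows of $A^{T}$, linearity in every column follows at once from the linearity in every row just established. Alternatively, one repeats the argument directly: because each $\sigma$ is a bijection, for a fixed column index $c$ there is exactly one row $i=\sigma^{-1}(c)$ with $\sigma(i)=c$, so every product contains precisely one factor $a_{\sigma^{-1}(c),c}$ from column $c$, which can again be pulled out of the product.

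There is no genuine obstacle here; the only point requiring a moment of care is the bookkeeping claim that each monomial contains exactly one entry from every row and every column. For rows this is immediate from the product ranging over all $i$, and for columns it is exactly the bijectivity of the permutations $\sigma\in D_{n}$. I would emphasize that the argument uses nothing about the dihedral structure beyond $D_{n}\subseteq S_{n}$: multilinearity holds for any signed sum of permutation monomials, which is why the same reasoning simultaneously recovers the familiar multilinearity of $\det$.
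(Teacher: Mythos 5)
Your proposal is correct and matches the paper's proof in essence: both establish row linearity by observing that each permutation monomial $\prod_i a_{i,\sigma(i)}$ contains exactly one factor $a_{j,\sigma(j)}$ from row $j$, which can be factored out, and both then obtain column linearity from Theorem~\ref{thm:AT}. Your closing remark that the argument uses only $D_n\subseteq S_n$ (so multilinearity holds for any signed sum of permutation monomials) is a nice observation beyond the paper, but it does not change the route.
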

\begin{proof} Let $j\in \{1,\cdots,n\}$ be fixed
  ${\bf{a}}_i^T=(a_{i,l}), i,l = 1,\cdots,n$, and
  ${\bf{b}}_j^T=(b_{j,l}),l = 1,\cdots,n$. The assertion concerning
  the rows follows from the following equation
  \begin{multline*}
    \dih((\mathbf{a}_{1},\dots,\alpha\mathbf{a}_{j} + \beta\mathbf{b}_{j},\dots,\mathbf{a}_{n})^{T})\! =\! \sum_{\sigma\in D_{n}}\sig(\sigma)\prod_{\substack{i=1\\ i\neq j}}^{n}a_{i,\sigma(i)}(\alpha a_{j,\sigma(j)} + \beta b_{j,\sigma(j)})\\
     = \alpha\sum_{\sigma\in D_{n}}\sig(\sigma)\prod_{i=1}^{n}a_{i,\sigma(i)} + 
    \beta\sum_{\sigma\in D_{n}}\sig(\sigma)\prod_{i=1,\ i\neq j}^{n}a_{i,\sigma(i)} b_{j,\sigma(j)}
  \end{multline*}
  Now, the assertion concerning the columns follows from Theorem~\ref{thm:AT}.
\end{proof}

There is an axiomatic characterization of the determinant as \emph{the} map from square matrices to numbers that is linear in every row, is invariant when adding one column to another, and gives the value one for the identity matrix (cf. the historic account~\cite{knobloch1994determinanttheory}). 
Note that the dihedrant only fulfills the first and the third of these determining properties, but not the second.

\section{When the False Sarrus Rule is actually right}

Of course, there are matrices $A$ for which $\dih(A) = \det(A)$; two obvious examples are $A=0$ and the identity matrix $I$ (for which $\dih(I) = \det(I)=1$), but there are more cases.

\subsection{Upper right and lower left triangular matrices}

Since for upper right and lower left triangular matrices both the determinant and the dihedrant are the product of the diagonal entries, we have $\dih(A) = \det(A)$ in this case. 
We can use Theorem~\ref{thm:rotation} and its analogon for rows of the matrix to build more examples of matrices where $\dih(A) = \det(A)$ and which are not easily recognized as such. One example is
\[
A =
\begin{pmatrix}
   1 &  0 &  0 & -1\\
   1 & -3 &  0 & -3\\
   1 &  1 &  5 &  5\\
   0 &  0 &  0 &  1
\end{pmatrix}
\]
with $\det(A) = \dih(A) = -15$ (Hint: Permute the last column to the first and the last row to the first.). However, there are also matrices $A$ for which $\dih(A) = \det(A)\neq 0$ which are not obtained by submitting indices of rows and columns of triangular matrices to rotations and reflections. One example we found by trial and error is
\[A =
\begin{pmatrix}
  2 & 2 & 2 & 2\\
  1 & 2 & 1 & 1\\
  2 & 2 & 2 & 1\\
  1 & 2 & 2 & 1
\end{pmatrix}
\]
with $\dih(A) = \det(A) = 2$.

\subsection{Rank deficient matrices}

Here are some classes of matrices with $\dih(A) = \det(A) = 0$:

\begin{theorem}\label{thm:rank1}
  If $\rank(A) =1,$ then $\dih(A)=0$.
\end{theorem}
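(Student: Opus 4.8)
The plan is to exploit the fact that a matrix of rank one is an outer product of two vectors. First I would write $A = \mathbf{u}\mathbf{v}^{T}$ with column vectors $\mathbf{u} = (u_1,\dots,u_n)^T$ and $\mathbf{v} = (v_1,\dots,v_n)^T$, so that the entries are $a_{i,j} = u_i v_j$. Substituting this into the definition of the dihedrant gives
\[
\dih(A) = \sum_{\sigma\in D_n}\sig(\sigma)\prod_{i=1}^n u_i v_{\sigma(i)}
= \sum_{\sigma\in D_n}\sig(\sigma)\Bigl(\prod_{i=1}^n u_i\Bigr)\Bigl(\prod_{i=1}^n v_{\sigma(i)}\Bigr).
\]

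The crucial observation is that, because each $\sigma\in D_n$ is a bijection of $\{1,\dots,n\}$, the factor $\prod_{i=1}^n v_{\sigma(i)}$ equals $\prod_{j=1}^n v_j$ independently of $\sigma$. Hence the entire product $\prod_{i=1}^n a_{i,\sigma(i)}$ is the same constant $c := \bigl(\prod_i u_i\bigr)\bigl(\prod_j v_j\bigr)$ for every $\sigma$, and the dihedrant collapses to
\[
\dih(A) = c\sum_{\sigma\in D_n}\sig(\sigma).
\]

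It then remains to check that the signature sum over the dihedral group vanishes. This is immediate from the definition of $\sig$: there are exactly $n$ rotations, each with $\sig(\rho_k) = +1$, and exactly $n$ reflections, each with $\sig(\mu_k) = -1$, so that $\sum_{\sigma\in D_n}\sig(\sigma) = n - n = 0$ and therefore $\dih(A)=0$.

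I do not expect any genuine obstacle here; the only point requiring care is the factorization of the product into a $\sigma$-independent constant, which hinges on $\sigma$ being a permutation. It is worth noting that this is exactly the same mechanism that forces $\det(A)=0$ for rank-one $A$ when $n\geq 2$, the only difference being that there one invokes $\sum_{\sigma\in S_n}\sgn(\sigma)=0$ in place of the corresponding identity for $D_n$.
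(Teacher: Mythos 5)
Your proof is correct and follows essentially the same route as the paper: the paper writes each column as $\alpha_i\mathbf{v}$, pulls the scalars out via multilinearity (Theorem~\ref{thm:linear}), and observes that all $2n$ products in the False Sarrus Rule for $\dih(\mathbf{v},\dots,\mathbf{v})$ coincide --- which is exactly your observation that $\prod_{i}a_{i,\sigma(i)}$ is a $\sigma$-independent constant, so the $n$ rotation terms cancel against the $n$ reflection terms. The only cosmetic difference is that you substitute the outer-product form $A=\mathbf{u}\mathbf{v}^{T}$ directly and thereby inline the multilinearity step, making the argument self-contained.
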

\begin{proof}
  Let $A\,=\ ({\bf{a}}_1,\cdots,{\bf{a}}_n),$ and
  $ {\bf{a}}_i=\alpha_i\, {\bf{v}}, {\bf{v}} \in \mathds{R}^n,
  i=1,\cdots,n$. Then, according to Theorem~\ref{thm:linear},
  \[
  \dih(A)\,=\,\left(\prod_{i=1}^n\alpha_i\right)\dih({\bf{v}},\cdots,{\bf{v}}).\]
  We have $\dih({\bf{v}},\cdots,{\bf{v}})$$\,=\,
  0$, as all products in the False Sarrus Rule have identical values.
\end{proof}


  

Now, we turn to the consideration of matrices that have the rank
two. In this case all rows of the matrix  are of the form
$\alpha_{i} \mathbf{a}^{T}
+ \beta_{i}
\mathbf{b}^T$, $i=1,\dots,n$, and, by Theorem~\ref{thm:linear} it holds that
\[
\dih(A) 
  = \alpha_{1}\dih
  \begin{pmatrix}
    \mathbf{a}^{T}\\
    \alpha_{2}\mathbf{a}^{T} + \beta_{2}\mathbf{b}^{T}\\
    \vdots\\
    \alpha_{n}\mathbf{a}^{T} + \beta_{n}\mathbf{b}^{T}x
  \end{pmatrix}
  +\beta_{1} \dih
  \begin{pmatrix}
    \mathbf{b}^{T}\\
    \alpha_{2}\mathbf{a}^{T} + \beta_{2}\mathbf{b}^{T}\\
    \vdots\\
    \alpha_{n}\mathbf{a}^{T} + \beta_{n}\mathbf{b}^{T}
  \end{pmatrix}.
\]
Applying the same rule over and over again, we end up with a linear combination
of dihedrants of matrices where the rows of each matrix are either $\mathbf{a}^{T}$ or $\mathbf{b}^{T}$.
\begin{exercise}
  Assume that in the above matrix $A$ we have $\mathbf{a}^{T}\neq 0$, $\mathbf{b}^{T}\neq 0$, and $\mathbf{a}^{T}\neq \mathbf{b}^{T}$. How many summands will the above mentioned linear combination have?
\end{exercise}
Our next step is to analyze the dihedrant of a matrix for which $k$ rows are $\mathbf{a}^{T}$ and $n-k$ rows are $\mathbf{b}^{T}$.
If
$k=0$ or $k=n,$ then those dihedrants are zero according to Theorem~\ref{thm:rank1}. For $k=1$ or $k=n-1$ we have the following result:

\begin{theorem}\label{thm:n-1-row-equal}
  If $n-1$ rows of the matrix $A$ are identical, then $\dih(A)=0$.
\end{theorem}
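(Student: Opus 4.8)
The plan is to expand $\dih(A)$ through its defining sum over $D_{n}$ and to group the summands according to the single column index that is selected by the one ``exceptional'' row; within each group the signed dihedral weights should cancel.

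First I would use the row analogue of Theorem~\ref{thm:rotation} (which only multiplies $\dih(A)$ by $\pm 1$, and so does not affect whether it vanishes) to assume without loss of generality that the exceptional row is the last one. Thus rows $1,\dots,n-1$ all equal a vector $\mathbf{a}^{T}=(a_{1},\dots,a_{n})$, while row $n$ equals $\mathbf{b}^{T}=(b_{1},\dots,b_{n})$. Then $a_{i,\sigma(i)}=a_{\sigma(i)}$ for $i<n$ and $a_{n,\sigma(n)}=b_{\sigma(n)}$, so each summand factors as
\[
\sig(\sigma)\,b_{\sigma(n)}\prod_{i=1}^{n-1}a_{\sigma(i)} .
\]
Since $\sigma$ is a bijection with $\sigma(n)=m$, the indices $\sigma(1),\dots,\sigma(n-1)$ run over $\{1,\dots,n\}\setminus\{m\}$, whence $\prod_{i=1}^{n-1}a_{\sigma(i)}=\prod_{j\neq m}a_{j}$ depends only on $m=\sigma(n)$ and not on the rest of $\sigma$.

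This lets me regroup $\dih(A)=\sum_{m=1}^{n}\bigl(b_{m}\prod_{j\neq m}a_{j}\bigr)\,S_{m}$, where $S_{m}=\sum_{\sigma\in D_{n},\ \sigma(n)=m}\sig(\sigma)$. The crux is then the purely group-theoretic claim that $S_{m}=0$ for every $m$. To establish it I would argue that, because the rotations act simply transitively on the vertices, exactly one rotation sends $n$ to $m$; and because the point stabiliser of $n$ in $D_{n}$ has order $|D_{n}|/n=2$ by the orbit--stabiliser theorem, injectivity of the map $\mu\mapsto\mu(n)$ on the $n$ reflections forces it to be a bijection, so exactly one reflection also sends $n$ to $m$. (Equivalently, one checks directly that $k\mapsto\rho_{k}(n)$ and $k\mapsto\mu_{k}(n)$ are both bijections of $\{1,\dots,n\}$.) Hence each $S_{m}$ is the sum of a single $+1$ and a single $-1$, giving $S_{m}=0$ and therefore $\dih(A)=0$.

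The step I expect to be the main obstacle is this last combinatorial pairing: making precise that, once the value at one coordinate is fixed, the rotations and the reflections of $D_{n}$ occur in equal numbers and so cancel in the signed sum. Everything else is bookkeeping, and it is exactly here that the defining structure of the dihedral group---the free rotation action and the order-two point stabilisers---is used.
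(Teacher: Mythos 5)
Your proof is correct and takes essentially the same route as the paper: both arguments reduce everything to the observation that, for each column index $m$ selected by the exceptional row, exactly one rotation and exactly one reflection in $D_{n}$ attain that value, so the $+1$ and $-1$ cancel termwise. The only difference is one of rigor --- the paper merely asserts this pairing (``any of these products appears twice, once corresponding to a rotation, and once to a reflection''), while you actually justify it via the simply transitive rotation action and the order-two point stabilizer, which is the one nontrivial step.
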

\begin{proof}
  Let $n-1$ rows be equal to ${\mathbf{b}}^T=(b_1,\cdots,b_n)$ and let
  the remaining row be ${\mathbf{a}}^T=(a_1,\cdots,a_n)$. Then every
  product in the False Sarrus Rule is of the form
  $a_i\prod_{j=1,j\neq i}^nb_j$. Any of these products appears twice,
  once corresponding to a rotation, and once to a reflection. Hence, they cancel which implies
  $\dih(A)=0$.
\end{proof}

For $k=2$ or $k=n-2$ the same is still true:
\begin{theorem}\label{n-2-row-equal}
  If $n-2$  rows of the matrix $A$ are identical, and the
  remainig two rows are identical, too, then $\dih(A)=0$.
\end{theorem}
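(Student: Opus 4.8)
The plan is to mimic the cancellation argument of Theorem~\ref{thm:n-1-row-equal}, but now the pairing of each rotation with a reflection has to be set up with some care. First I would fix notation: let the two rows equal to $\mathbf{a}^{T}=(a_{1},\dots,a_{n})$ sit in positions $p$ and $q$, and let every other row equal $\mathbf{b}^{T}=(b_{1},\dots,b_{n})$. For a permutation $\sigma\in D_{n}$ the corresponding product in the False Sarrus Rule factors as
\[
\prod_{i=1}^{n}a_{i,\sigma(i)} = a_{\sigma(p)}\,a_{\sigma(q)}\prod_{j\notin\{\sigma(p),\sigma(q)\}}b_{j},
\]
because $\sigma$ is a bijection, so the remaining row indices are sent exactly onto $\{1,\dots,n\}\setminus\{\sigma(p),\sigma(q)\}$. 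Writing $f(u,v)=a_{u}a_{v}\prod_{j\notin\{u,v\}}b_{j}$, the product depends on $\sigma$ only through the \emph{unordered} pair $\{\sigma(p),\sigma(q)\}$, and $f$ is symmetric in its two arguments.

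Next I would read off the action of $D_{n}$ on the indices modulo $n$: a rotation acts as $i\mapsto i+c$ and a reflection as $i\mapsto d-i$, and as the group parameter runs over its range, $c$ and $d$ each run over every residue class exactly once. The heart of the proof is the following pairing. To the rotation with shift $c$ I associate the reflection with $d=p+q+c \pmod n$. A direct check gives
\[
d-p \equiv q+c \quad\text{and}\quad d-q\equiv p+c \pmod n,
\]
so this reflection sends $p\mapsto q+c$ and $q\mapsto p+c$, i.e.\ it produces exactly the images of $p$ and $q$ under the rotation, only interchanged. Hence the two permutations yield the same unordered pair $\{p+c,q+c\}$, and by the symmetry of $f$ they contribute the same product.

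Since $c\mapsto p+q+c$ is a bijection of residues, this assignment is a bijection between the $n$ rotations and the $n$ reflections. Each matched pair consists of a rotation with $\sig=+1$ and a reflection with $\sig=-1$ carrying identical products, so the two terms cancel; summing over all matched pairs exhausts $D_{n}$ and gives $\dih(A)=0$. The main obstacle is precisely the construction of this bijection: unlike the $n-1$ case, where each product value is visibly hit once by a rotation and once by a reflection, here one must use the arithmetic of $D_{n}$ to identify the reflection with parameter $d=p+q+c$ as the correct partner of the rotation with shift $c$. Verifying that this partner reproduces the same unordered pair $\{\sigma(p),\sigma(q)\}$ is the one computational step that makes the cancellation work.
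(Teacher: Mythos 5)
Your proposal is correct and takes essentially the same approach as the paper: the paper's proof likewise cancels each rotation against a reflection carrying the identical product $a_ia_j\prod_{l\neq i,j}b_l$, though it merely asserts that each product occurs once for a rotation and once for a reflection. Your explicit bijection $c\mapsto d=p+q+c \pmod{n}$ supplies the pairing the paper leaves implicit, and it even handles uniformly the edge case $2(q-p)\equiv 0 \pmod{n}$, where a single product value is shared by two rotations and two reflections.
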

\begin{proof}
  Let the $n-2$ identical rows be equal to
  ${\mathbf{b}}^T=(b_1,\cdots,b_n)$ and let the remaining rows be
  equal to ${\mathbf{a}}^T=(a_1,\cdots,a_n)$. Then every product in the
  False Sarrus Rule is of the form
  $a_ia_j\prod_{l=1,l\neq i,j}^nb_l$. If a product of this form
  appears in the False Sarrus Rule, it appears twice. Similar to Theorem~\ref{thm:n-1-row-equal}, one product corresponds to a rotation, the other to a reflection; again they cancel and we obtain $\dih(A)=0$.
\end{proof}

If only $n-3$ rows are identical and the remaining three or more rows
are identical, this reasoning is no longer valid. This is exemplified
by the following $6\times 6$ example 
\[
A\,=\,\left(\begin{array}{cccccc} 1 & 1 & 0 & 0 & 1 & 0\\1 & 1 & 0 & 0 & 1 & 0\\1 & 1 & 1 & 1 & 1 & 1\\ 1 & 1 & 1 & 1 & 1 & 1\\1 & 1 & 0 & 0 & 1 & 0\\1 & 1 & 1 & 1 & 1 & 1\end{array}\right).
\]
Here $\dih(A)=\,1,$ and $\rank(A)=2$.

Combining Theorems~\ref{thm:n-1-row-equal} and~\ref{n-2-row-equal} for $n=4$ and $n=5$ we obtain:

\begin{corollary}
  If $n=4$ or $n=5$, and $\rank(A)\leq 2$, then
  $\dih(A)=0$.
\end{corollary}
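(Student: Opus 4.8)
The plan is to exploit the multilinear expansion of $\dih$ already set up in the paragraph preceding Theorem~\ref{thm:n-1-row-equal}. Since $\rank(A)\leq 2$, all rows of $A$ lie in a subspace of dimension at most two, so each row can be written as $\alpha_i\mathbf{a}^T + \beta_i\mathbf{b}^T$ for two fixed vectors $\mathbf{a},\mathbf{b}\in\RR^n$ and scalars $\alpha_i,\beta_i$. Applying Theorem~\ref{thm:linear} (linearity in each row) one row at a time, I would expand $\dih(A)$ into a linear combination of dihedrants $\dih(M_S)$ indexed by subsets $S\subseteq\{1,\dots,n\}$, where $M_S$ is the matrix whose $i$-th row equals $\mathbf{a}^T$ if $i\in S$ and equals $\mathbf{b}^T$ otherwise. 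It therefore suffices to show that $\dih(M_S)=0$ for every such $M_S$.

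First I would set $k=|S|$, so that $M_S$ has $k$ rows equal to $\mathbf{a}^T$ and the remaining $n-k$ rows equal to $\mathbf{b}^T$. The argument is then a short case analysis governed by $\min(k,n-k)$. If $k\in\{0,n\}$, all rows of $M_S$ coincide, so $\rank(M_S)\leq 1$ and $\dih(M_S)=0$ by Theorem~\ref{thm:rank1}. If $\min(k,n-k)=1$, then $n-1$ rows of $M_S$ are identical and Theorem~\ref{thm:n-1-row-equal} gives $\dih(M_S)=0$. If $\min(k,n-k)=2$, then $n-2$ rows are identical while the remaining two are identical as well, so Theorem~\ref{n-2-row-equal} yields $\dih(M_S)=0$.

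The only place the hypothesis $n\in\{4,5\}$ is used is the elementary bound $\min(k,n-k)\leq\lfloor n/2\rfloor\leq 2$, valid for every $k$ precisely when $n\leq 5$. Consequently every matrix $M_S$ appearing in the expansion falls into one of the three cases above, each term vanishes, and hence $\dih(A)=0$. I do not expect any real obstacle: the entire content is the combinatorial fact that splitting $n$ rows into two blocks always leaves the smaller block of size at most two when $n\leq 5$. This is exactly what breaks down at $n=6$, where the balanced split $3+3$ escapes all three theorems, in agreement with the $6\times 6$ counterexample exhibited just before the corollary.
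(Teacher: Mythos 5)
Your proof is correct and takes essentially the same route as the paper: the multilinear expansion into matrices whose rows are drawn from $\{\mathbf{a}^{T},\mathbf{b}^{T}\}$ is exactly the setup in the paragraph preceding Theorem~\ref{thm:n-1-row-equal}, and the paper likewise obtains the corollary by combining Theorems~\ref{thm:rank1}, \ref{thm:n-1-row-equal}, and~\ref{n-2-row-equal}, using that for $n\leq 5$ the smaller block in any split of the rows has size at most two. Your closing remark about the balanced $3+3$ split at $n=6$ matches the paper's $6\times 6$ counterexample as well.
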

As the following matrix shows, $4\times 4$ matrices $A$ with $\rank(A) =3$ do not necessarily have dihedrant equal to zero
\[
A\,=\,\left(\begin{array}{cccc} 1 & 2 & 3 & 4\\1 & 2 & 3 & 4\\1 & 0 &
    0 & 0\\0 & 0 & 0 & 1\end{array}\right).
\]
Indeed, it holds that $\dih(A)=-6$, but $\det(A)=0.$

\subsection{Upper left and lower right triangular matrices}

To see when upper left or lower right triangular matrices have equal
determinant and dihedrant, it is helpful to calculate the signs of
rotations and reflections. 

\begin{lemma}
  \label{lem:signs-refl-inv}
  Let $\rho_k$  and $\mu_k, k=1,\cdots,n$, be as above, and
  \[
  N(\rho_k)\,:=\,(k-1)(n-k+1),\]
  and
  \[
  N(\mu_k)\,:=\,\frac{(n-k-1)(n-k)}{2}\,+\,\frac{k(k-1)}{2}.\]
  Then $\sgn(\rho_k)\,=\,(-1)^{N(\rho_k)}$ and $\sgn(\mu_k)\,=\,(-1)^{N(\mu_k)}$.
  
\end{lemma}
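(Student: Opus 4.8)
The plan is to use the standard fact that the sign of a permutation $\sigma$ equals $(-1)$ raised to the number of its inversions, i.e.\ the number of pairs $(i,j)$ with $i<j$ but $\sigma(i)>\sigma(j)$. Thus it suffices to verify that $N(\rho_k)$ and $N(\mu_k)$ count exactly the inversions of $\rho_k$ and $\mu_k$, respectively, and the whole argument reduces to careful bookkeeping from the explicit one-line descriptions given above.

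First I would treat the rotation. Reading off the values, $\rho_k$ sends $1,\dots,n$ to
\[
k,\, k+1,\, \dots,\, n,\, 1,\, 2,\, \dots,\, k-1,
\]
which is an increasing run of length $n-k+1$ followed by an increasing run of length $k-1$. Within either run there are no inversions, since the values increase; on the other hand, every value in the first run (one of $k,\dots,n$) exceeds every value in the second run (one of $1,\dots,k-1$), so every one of the $(n-k+1)(k-1)$ straddling pairs is an inversion. Hence $\rho_k$ has exactly $(n-k+1)(k-1) = N(\rho_k)$ inversions, giving $\sgn(\rho_k)=(-1)^{N(\rho_k)}$.

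Next I would treat the reflection in the same way. Here $\mu_k$ sends $1,\dots,n$ to
\[
k,\, k-1,\, \dots,\, 1,\, n,\, n-1,\, \dots,\, k+1,
\]
a decreasing run of length $k$ followed by a decreasing run of length $n-k$. A decreasing run of length $m$ contributes all $\binom{m}{2}$ of its internal pairs as inversions, so the two runs contribute $\binom{k}{2}=\tfrac{k(k-1)}{2}$ and $\binom{n-k}{2}=\tfrac{(n-k)(n-k-1)}{2}$ inversions. This time the straddling pairs contribute nothing, because the first run carries the small values $1,\dots,k$ and the second run the large values $k+1,\dots,n$, so no straddling pair is inverted. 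Summing gives exactly $N(\mu_k)$ inversions and hence $\sgn(\mu_k)=(-1)^{N(\mu_k)}$.

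I expect no genuine obstacle here; the content is entirely the inversion count, and the only points requiring care are getting the two one-line forms right and checking the opposite behaviour of the straddling pairs in the two cases (all inverted for $\rho_k$, none inverted for $\mu_k$). Degenerate values of $k$, such as $k=1$ where one of the runs is empty, are handled automatically, since the formulas then return the correct value (for instance $\rho_1$ is the identity with $N(\rho_1)=0$).
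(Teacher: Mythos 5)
Your proof is correct and matches the paper's in substance: the paper sorts the same one-line forms $(k,k+1,\dots,n,1,\dots,k-1)$ and $(k,k-1,\dots,1,n,\dots,k+1)$ by counting adjacent transpositions, which is exactly your inversion count in a different guise (each of your $(k-1)(n-k+1)$ straddling inversions for $\rho_k$ corresponds to one of the paper's swaps moving $1,\dots,k-1$ forward, and your $\binom{k}{2}+\binom{n-k}{2}$ internal inversions for $\mu_k$ match its two triangular sums). Your version is if anything slightly cleaner, since invoking $\sgn(\sigma)=(-1)^{\#\text{inversions}}$ directly avoids describing a sorting procedure, but there is no genuinely different idea involved.
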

\begin{proof}
  
  If we consider the values $(k,k+1,\cdots,n,1,\cdots,k-1)$ of
  $\rho_k$, we see that $n-k+1$ transpositions are needed to permute $1$
  to the first place. For the numbers $2,\cdots,k-1$ we also need
  $n-k+1$ to gain the natural order. This implies the formula for
  $N(\rho_k)$.
  
  If we consider the values $(k,k-1,\cdots,1,n,\cdots,k+1)$ of
  $\mu_k$, we see that $k-1$ transpositions are needed to place $1$
  onto the first place, $k-2$ transpositions for the number $2$, and so
  on. Hence, we need
  \[\sum_{j=1}^{k-1}j =\frac{k(k-1)}{2}\]
  transpositions for the first $k$ values and likewise
  $\frac{(n-k-1)(n-k)}{2}$ transpositions for the last $n-k-1$
  elements. This proves the formula for $N(\mu_k)$.
\end{proof}

\begin{exercise}
  Calculate a more explicit formula for the $\sgn$ of rotations and reflection, i.e. $\sgn(\rho_{k})$ and $\sgn(\mu_{k})$, or, put differently, determine for which $n$ and $k$ it holds $\sgn(\rho_{k}) = \sig(\rho_{k})$, $\sgn(\mu_{k}) = \sig(\mu_{k})$. Hint: Use the remainder of $n$ and $k$ by division by two for the rotations and the remainder of $n$ and $k$ by division by four for the reflections.
\end{exercise}
In the case of an upper left or lower right triangular matrix with non-zero entries on the anti-diagonal, the False Sarrus Rule only has one non-zero summand (the product of the entries of the anti-diagonal) and we get
\[
\dih(A)\,=\,-\prod_{i=1}^na_{i,(n-i+1)}.
\]
For the determinant of such a matrix there is also only one non-zero
summand in Leibniz' formula (the same product as for the dihedrant)
which is the one that corresponds to the reflection $\mu_{n}$. By the
second part of Lemma~\ref{lem:signs-refl-inv} the sign of this
summand is
\[
\sgn(\mu_{n}) =
\begin{cases}
  \hphantom{-}1, & \text{for $\tfrac{n(n-1)}2$ even},\\
  -1, & \text{for $\tfrac{n(n-1)}2$ odd},\\
\end{cases}
\]
and thus,
\[ 
\det(A)=
\begin{cases}
  -\prod_{i=1}^na_{i,(n-i+1)}, & \text{if $\bmod(n,4) = 2$ or $\bmod(n,4)=3$}\\
  \hphantom{-}\prod_{i=1}^na_{i,(n-i+1)}, & \text{if $\bmod(n,4)= 0$ or $\bmod(n,4)=1$}.
\end{cases}
\]
We conclude
\begin{theorem}
  Let $A$ be an upper left or lower right triangular $n\times n$ matrix. Then it holds that
  $\dih(A) = \det(A)$ if $\bmod(n,4)= 2$ or $\bmod(n,4)=3$.
\end{theorem}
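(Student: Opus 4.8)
The plan is to reduce the claim to the two closed-form expressions derived in the paragraph immediately preceding the theorem and then simply compare their signs. The starting observation is that for an upper left or lower right triangular matrix every permutation other than the one running along the anti-diagonal must pick up at least one structurally-zero entry; hence in both Leibniz' formula and the False Sarrus Rule only the single term $\prod_{i=1}^{n}a_{i,(n-i+1)}$ survives. This anti-diagonal permutation is precisely the reflection $\mu_{n}$, since $\mu_{n}(i) = n+1-i$.

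Next I would identify the two signs attached to this surviving term. On the dihedrant side the sign is $\sig(\mu_{n}) = -1$ by definition, so $\dih(A) = -\prod_{i=1}^{n}a_{i,(n-i+1)}$. On the determinant side the sign is $\sgn(\mu_{n})$, which by Lemma~\ref{lem:signs-refl-inv} equals $(-1)^{N(\mu_{n})}$. Substituting $k=n$ into the formula for $N(\mu_{k})$ makes the first summand vanish and leaves $N(\mu_{n}) = \tfrac{n(n-1)}{2}$, so that $\det(A) = (-1)^{n(n-1)/2}\prod_{i=1}^{n}a_{i,(n-i+1)}$.

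The only genuine computation is then the parity of $\tfrac{n(n-1)}{2}$, which I would settle by checking the four residue classes of $n$ modulo $4$: the exponent is even when $\bmod(n,4)\in\{0,1\}$ and odd when $\bmod(n,4)\in\{2,3\}$. Consequently $\det(A) = -\prod_{i=1}^{n}a_{i,(n-i+1)}$ exactly when $\bmod(n,4)=2$ or $\bmod(n,4)=3$, and in those cases it coincides with $\dih(A)$, which proves the theorem. There is essentially no obstacle here, since the heavy lifting has already been carried out in Lemma~\ref{lem:signs-refl-inv}; the only point worth noting is the degenerate case in which some anti-diagonal entry is zero, but then both quantities vanish and the asserted equality holds a fortiori, so the ``if'' direction we are claiming needs only the sign comparison above.
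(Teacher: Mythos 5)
Your proposal is correct and takes essentially the same route as the paper: both arguments reduce each formula to the single surviving anti-diagonal term, identify it with the reflection $\mu_{n}$, compare $\sig(\mu_{n})=-1$ against $\sgn(\mu_{n})=(-1)^{N(\mu_{n})}=(-1)^{n(n-1)/2}$ via Lemma~\ref{lem:signs-refl-inv}, and settle the parity by the residue of $n$ modulo $4$. Your explicit remark about a vanishing anti-diagonal entry (where both sides are zero) is a minor point the paper's derivation sidesteps by assuming non-zero anti-diagonal entries, but otherwise the two proofs coincide.
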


\begin{exercise}
  Work out under which conditions there holds $\dih(A) = \det(A)$ where $A$ is an $n\times n$ matrix with the following pattern of non-zero entries:
\begin{equation*}
  \begin{pmatrix}
    * & *      &        & 0\\
    & \ddots & \ddots & \\
    & 0      & \ddots & *\\
    * &        &        & *  
  \end{pmatrix}
\end{equation*}
\end{exercise}


\section{A modified Sarrus' rule for $4\times4$ matrices.}

The most obvious hint that the dihedrant fails to be equal to the determinant in general is that it has way too few summands. This is due to the fact that the dihedral group $D_{n}$ only has $2n$ elements, while the symmetric group $S_{n}$ has $n!$.
For the case $n=4$ we need $4!=24$ summands and the False Sarrus Rule only provides $8$ of them
(and half of them even have the wrong sign).
Here is a scheme that provides all $24$ summands with the correct signs:
\begin{center}
  \includegraphics{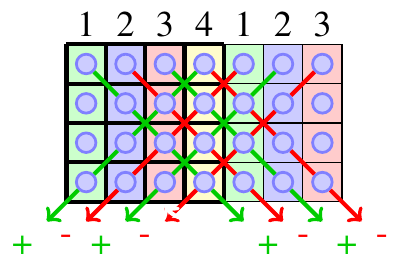}%
  \includegraphics{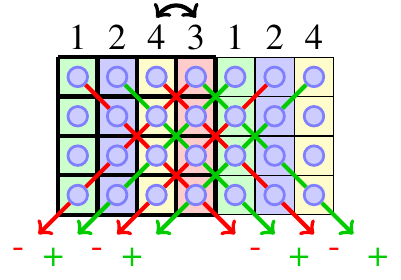}%
  \includegraphics{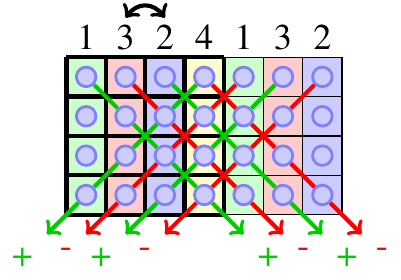}
\end{center}
We do not show a similar rule for the cases $n\geq 5$, since one needs $\tfrac{(n-1)!}{2}$ permutations of columns to get the needed total of $n!$ permutations.
We are by far not the first, to propose such an approach to memorize determinants.
The oldest reference we could track down is~\cite{arschon1935sarrus} (Russian original~\cite{arschon1935}) and a recent reference is~\cite{sobamowo2016extensionsarrusrule}.
\bigskip


\textbf{Acknowledgement.}  The authors thank the user Douglas Zare from \url{https://mathoverflow.net}
  who provided an answer to an initial question on this topic.

\end{document}